\theoremstyle{plain}
\newtheorem{theorem}{Theorem}
\newtheorem{corollary}{Corollary}
\newtheorem{lemma}{Lemma}
\theoremstyle{definition}
\newtheorem{definition}{Definition}
\newtheorem{example}{Example}
\newtheorem{remark}{Remark}
\newtheorem{question}{Question}
\def\PP{\mathbb{P}^}
\def\sr{\mathrm{sr}}
\def\sbr{\mathrm{sbr}}
\date{}
\begin{document}

\title[Decomposition of polynomials]
{Decomposition of homogeneous polynomials with low rank}
\author{Edoardo Ballico, Alessandra Bernardi}
\address{Dept. of Mathematics\\
  University of Trento\\
38123 Povo (TN), Italy}
\address{GALAAD, INRIA M\'editerran\'ee,  BP 93, 06902 Sophia 
Antipolis, France.}
\email{ballico@science.unitn.it, alessandra.bernardi@inria.fr}
\thanks{The authors were partially supported by CIRM of FBK Trento 
(Italy), Project Galaad of INRIA Sophia Antipolis M\'editerran\'ee 
(France), Institut Mittag-Leffler (Sweden), Marie Curie: Promoting science (FP7-PEOPLE-2009-IEF), MIUR and GNSAGA of 
INdAM (Italy).}
\subjclass{15A21, 15A69, 14N15}
\keywords{Waring problem, Polynomial decomposition, Symmetric rank, 
Symmetric tensors, Veronese varieties, Secant varieties.}

\maketitle
\begin{quote}ABSTRACT: \emph{Let $F$ be a homogeneous polynomial of 
degree $d$ in $m+1$ variables defined over an algebraically closed 
field of characteristic 0 and suppose that $F$ belongs to the 
$s$-th secant variety of the $d$-uple Veronese embedding
of $\mathbb{P}^m$ into  $ \PP {{m+d\choose d}-1}$ but that its 
minimal decomposition as a sum of $d$-th powers of linear forms
$M_1, \ldots , M_r$ is $F=M_1^d+\cdots + M_r^d$ with $r>s$. We show 
that if $s+r\leq 2d+1$ then such a decomposition of $F$
can be split in two parts:  one of them is made by linear forms that 
can be written using only two variables, the other part
is  uniquely determined once one has fixed the first part. We also 
obtain a uniqueness theorem for the minimal decomposition
of $F$ if $r$ is at most $d$ and a mild condition is satisfied.}
\end{quote}

\section*{Introduction}
The decomposition of a homogeneous polynomial that combines a minimum 
number of terms and that involves a minimum number of variables is a 
problem arising from classical Algebraic Geometry (\cite{ah}, \cite{ik}), Computational Complexity (\cite{ls}) 
and Signal Processing (\cite{vw}).
Any statement on homogeneous polynomials can be translated in an 
equivalent statement on symmetric tensors. In fact, if we indicate 
with $V$ a vector space of dimension $m+1$ defined over
a field $K$
of characteristic 0,
and with $V^*$ its dual space, then, for any positive integer $d$, 
there is an obvious identification between the vector space of 
symmetric tensors  $S^dV^*\subset (V^{*})^{\otimes d}$ and the space 
of homogeneous polynomials $K[x_0, \ldots , x_m]_d$ of degree $d$ 
defined over $K$.
In this paper we will always work with an algebraically closed field 
$K$ of characteristic $0$. The requirement that a form (or a 
symmetric tensor) involves a minimum number of terms is a quite 
recent and very interesting problem coming from applications.
Given a form $F\in K[x_0, \ldots , x_m]_d$ (or a symmetric tensor $T\in S^dV^*$),  the 
minimum positive integer $r$ for which there exist linear forms $L_1, 
\ldots , L_r\in K[x_0, \ldots , x_m]_1$ (vectors $v_1, \ldots , 
v_r\in V^*$ respectively) such that
\begin{equation}\label{FT}
F=L_1^d+ \cdots + L_r^d, \; \; \; \;   (T=v_1^{\otimes d}+ \cdots 
+v_r^{\otimes d})
\end{equation}
is called the {\it symmetric rank} $\sr(F)$ of $F$ ($\sr(T)$ of $T$ respectively).
Computations of the symmetric rank for a given form (or a given symmetric tensor) 
are studied in \cite{cs}, \cite{bgi}, \cite{bcmt} and \cite{bb}.
First of all we focus our attention on those particular 
decompositions of a form $F\in K[x_0, \ldots ,x_m]_d$ (or $T\in S^dV^*$) of the type 
(\ref{FT}) with $r=\sr(F)$ ($r=\sr(T)$ respectively). What about the possible 
uniqueness of the decomposition of such a form $F$ ($T$ respectively)?
A general form, for example, can have a unique decomposition as in 
(\ref{FT}) only if $\frac{1}{n+1}{n+d \choose n}\in \mathbb{Z}$ (see 
\cite{rs}, \cite{me}, \cite{me1}, \cite{cmr} also for further results 
on this normal form). If the polynomial is not general, very few 
things are known. 

Let $X_{m,d}\subset \mathbb{P}^N$, with $m \ge 1$,  $d \ge 2$ and 
$N:= {{m+d}\choose m}-1$, be the classical Veronese variety obtained 
as the image of the $d$-uple Veronese embedding $\nu _d: \PP m \to \PP 
N$. The {\it $s$-th secant variety} $\sigma_s(X_{m,d})$ of the 
Veronese variety $X_{m,d}$ is the Zariski closure in $\mathbb {P}^N$
of the union of all linear spans $\langle P_1, \ldots , P_s \rangle$ 
with $P_1, \ldots , P_s \in X_{m,d}$.
   For  any point $P\in \PP N$, we indicate with $\sbr(P)=s$ the 
minimum integer $s$ such that $P\in \sigma_s(X_{m,d})$. This integer 
is called the {\it symmetric border rank} of $P$.
By a famous theorem of J. Alexander and A. Hirschowitz all integers 
$\dim (\sigma _s(X_{m,d}))$ are known (\cite{ah},  \cite{ci}, 
\cite{bo}).  Since  $\mathbb{P}^m\simeq \mathbb{P}(K[x_0, \ldots , 
x_m]_1)\simeq \mathbb{P}(V^*)$, the generic element belonging to 
$\sigma_s(X_{m,d})$ is  the projective class of a form (a symmetric tensor) 
of type (\ref{FT}).
Unfortunately, for a given $P\in \mathbb{P}^N$, we only have the inequality $\sbr(P) 
\leq  \sr(P)$. For the forms $F$ for which the decomposition 
(\ref{FT}) is not unique, it makes sense to study those different 
decompositions.
There is a uniqueness
theorem for general points with prescribed non-maximal symmetric border rank
$s$ using the notion of $(s-1)$-weakly non-defectivity introduced
by C. Ciliberto and L. Chiantini (\cite{cc}, \cite{cr},
Proposition 1.5).
In this paper we are interested in those particular decompositions of 
a given $F\in K[x_0, \ldots , x_m]_d$  of the type (\ref{FT}) with 
$r=\sr(F)$ and $\sbr(F)<\sr(F)$ ($T\in S^dV^*$ respectively).
In many applications one would like to reduce the number of 
variables,  at least for a part of the data.
For such a particular choice of $F$, is it possible to find linear 
forms $L_1, L_2,M_1, \ldots , M_t\in K[x_0, \ldots , x_m]_1$ and a 
binary form $Q\in K[L_1, L_2]_d$, such that a given polynomial $F\in 
K[x_0, \ldots , x_m]_d$ can be written as
$F=Q+M_1^d+\cdots + M_t^d$?
(On normal forms of homogeneous polynomials see also \cite{lt}, 
\cite{cm}, \cite{ik}.)
The main result of this paper is the following.

\begin{theorem}\label{a1}
Let $P\in \PP N$ with $N={m+d\choose d}-1$.
Suppose that:
$$\begin{array}{c}
\sbr(P)<\sr(P) \hbox{ and}
\\
\sbr(P)+\sr(P)\leq 2d+1.
\end{array}$$
Let $\mathcal{S}\subset X_{m,d}$ be a $0$-dimensional reduced
subscheme that realizes the symmetric rank of $P$, and let
$\mathcal{Z}\subset X_{m,d}$ be a smoothable $0$-dimensional non-reduced
subscheme such that $P\in \langle \mathcal{Z} \rangle$ and
$\deg{\mathcal{Z}}\leq \sbr(P)$. Let also $C_d\subset X_{m,d}$ be the unique
rational normal curve that intersects $\mathcal{S}\cup
\mathcal{Z}$ in degree at least $d+2$. Then, for all points $P\in \PP N$ as above we have that:
$$
\mathcal{S}=\mathcal{S}_1\sqcup \mathcal{S}_2,\ \ \ \ 
\mathcal{Z}=\mathcal{Z}_1\sqcup \mathcal{S}_2,
$$
where $\mathcal{S}_1=\mathcal{S}\cap C_d$,
$\mathcal{Z}_1=\mathcal{Z}\cap C_d$ and
$\mathcal{S}_2=(\mathcal{S}\cap \mathcal{Z})\setminus
\mathcal{S}_1$.
\\
Moreover $\deg (\mathcal {Z}) = \sbr (P)$ and the scheme 
$\mathcal{S}_2$ is unique.
\end{theorem}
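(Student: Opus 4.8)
The plan is to transfer the problem to the cohomology of ideal sheaves on $\mathbb{P}^m$, identifying $\mathcal{S}$ and $\mathcal{Z}$ with their preimages under $\nu_d$, so that the containments $P\in\langle \nu_d(\mathcal{S})\rangle$ and $P\in\langle \nu_d(\mathcal{Z})\rangle$ become statements about the failure of $\mathcal{S}$ and $\mathcal{Z}$ to impose independent conditions in degree $d$. I would set $W:=\mathcal{S}\cup\mathcal{Z}$ and first establish the cohomological input $h^1(\mathbb{P}^m,\mathcal{I}_W(d))>0$. Indeed, if $W$ imposed independent conditions in degree $d$, the standard Grassmann-type lemma would give $\langle \nu_d(\mathcal{S})\rangle\cap\langle \nu_d(\mathcal{Z})\rangle=\langle \nu_d(\mathcal{S}\cap\mathcal{Z})\rangle$; since $\deg\mathcal{Z}\le\sbr(P)<\sr(P)=\deg\mathcal{S}$ forces $\mathcal{S}\cap\mathcal{Z}\subsetneq\mathcal{S}$, the point $P$ would then lie in $\langle \nu_d(\mathcal{S}\cap\mathcal{Z})\rangle$, contradicting the minimality of $\mathcal{S}$. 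Note also $\deg W\le\deg\mathcal{S}+\deg\mathcal{Z}\le\sr(P)+\sbr(P)\le 2d+1$.

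Next I would produce and pin down the curve. Applying the Castelnuovo-type structural lemma (a zero-dimensional scheme of degree $\le 2d+1$ with $h^1(\mathcal{I}(d))>0$ meets some line in degree $\ge d+2$) to $W$ yields a line $L\subset\mathbb{P}^m$ with $\deg(W\cap L)\ge d+2$, and $C_d=\nu_d(L)$. Uniqueness is a degree count: two distinct such lines meet in at most one point, so they would force $\deg W\ge 2(d+2)-1=2d+3>2d+1$. With $L$ fixed, the residual scheme satisfies $\deg\mathrm{Res}_L(W)\le(2d+1)-(d+2)=d-1$, and a second application of the structural lemma in degree $d-1$ is impossible (a line meeting $\mathrm{Res}_L(W)$ in degree $\ge d+1$ cannot exist, as $d-1<d+1$), so $h^1(\mathcal{I}_{\mathrm{Res}_L(W)}(d-1))=0$. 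Thus the failure of independent conditions is concentrated entirely on $L$.

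The heart of the argument is the splitting, and this is the step I expect to be the main obstacle. Writing $\mathcal{S}_1=\mathcal{S}\cap C_d$ and $\mathcal{Z}_1=\mathcal{Z}\cap C_d$, I must show that the off-curve parts $\mathrm{Res}_L(\mathcal{S})$ and $\mathrm{Res}_L(\mathcal{Z})$ coincide, are reduced, and are contained in $\mathcal{S}\cap\mathcal{Z}$. The mechanism is that the vanishing $h^1(\mathcal{I}_{\mathrm{Res}_L(W)}(d-1))=0$ makes the off-curve contribution to $P$ rigid: since $\mathrm{Res}_L(\mathcal{S})$, $\mathrm{Res}_L(\mathcal{Z})$ and their union $\mathrm{Res}_L(W)$ all impose independent conditions in degree $d-1$, the Grassmann argument applied to the residuals identifies the relevant intersection of spans with the span of $\mathrm{Res}_L(\mathcal{S})\cap\mathrm{Res}_L(\mathcal{Z})$, and minimality then forces $\mathrm{Res}_L(\mathcal{S})=\mathrm{Res}_L(\mathcal{Z})=:\mathcal{S}_2$. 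The delicate point, which I expect to be the crux, is running this separation for the non-reduced $\mathcal{Z}$: one must show that the trace $\mathcal{Z}\cap L$ absorbs all of the non-reduced structure, so that $\mathrm{Res}_L(\mathcal{Z})$ is reduced (it equals $\mathrm{Res}_L(\mathcal{S})\subseteq\mathcal{S}$), and verify that residuation with respect to $L$ interacts correctly with the spans via the residual exact sequence. Granting this, $\mathcal{S}=\mathcal{S}_1\sqcup\mathcal{S}_2$ and $\mathcal{Z}=\mathcal{Z}_1\sqcup\mathcal{S}_2$ follow, with $\mathcal{S}_2=(\mathcal{S}\cap\mathcal{Z})\setminus\mathcal{S}_1$.

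Finally, for the numerical and uniqueness assertions: smoothability of $\mathcal{Z}$ together with $P\in\langle\mathcal{Z}\rangle$ gives $\langle\mathcal{Z}\rangle\subseteq\sigma_{\deg\mathcal{Z}}(X_{m,d})$, whence $\sbr(P)\le\deg\mathcal{Z}$; combined with the hypothesis $\deg\mathcal{Z}\le\sbr(P)$ this yields $\deg\mathcal{Z}=\sbr(P)$. The uniqueness of $\mathcal{S}_2$ then follows from the rigidity established above: the off-curve part is determined by $P$ and $C_d$ through the vanishing $h^1(\mathcal{I}_{\mathrm{Res}_L(W)}(d-1))=0$, hence is independent of the particular choices of $\mathcal{S}$ and $\mathcal{Z}$.
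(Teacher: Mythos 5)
Your setup is sound and matches the paper's own strategy: the argument that $h^1(\mathbb{P}^m,\mathcal{I}_W(d))>0$ (the paper's Lemma \ref{c0}), the existence and uniqueness, for the fixed pair $(\mathcal{S},\mathcal{Z})$, of the line $\ell$ with $\deg (W\cap \ell)\ge d+2$ (the paper's Lemma \ref{p1}), and the identity $\deg (\mathcal{Z})=\sbr (P)$ are all handled correctly. But the two assertions that constitute the actual content of the theorem are not proved. First, the splitting: you describe a ``mechanism'' and then write ``Granting this'', which is an explicit deferral of what you yourself identify as the crux. The mechanism as stated does not close, because $P$ lies in $\langle \nu_d(S)\rangle \cap \langle \nu_d(Z)\rangle$ and not in the spans of the residual schemes $\mathrm{Res}_\ell (S)$, $\mathrm{Res}_\ell (Z)$, so minimality of $\sr$ or $\sbr$ cannot be invoked directly on the residuals; a bridge is needed from membership of $P$ in the ambient spans to statements about the off-line parts. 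The paper builds that bridge by running one and the same argument four times (its steps (a)--(d)), each time with a different auxiliary subscheme $W_2\subseteq W$ satisfying $W_2\cap \ell = W\cap \ell$: Lemma \ref{p1} gives $\dim \langle \nu_d(W)\rangle - \dim \langle \nu_d(W_2)\rangle = \deg (W)-\deg (W_2)$; Grassmann's formula plus linear independence of $\mathcal{S}$ (resp.\ of $\nu_d(Z)$) identifies $\langle \nu_d(W_2)\rangle \cap \langle \nu_d(Z)\rangle$ with $\langle \nu_d(W_2\cap Z)\rangle$ (resp.\ the analogous statement with $\mathcal{S}$); hence $P\in \langle \nu_d(W_2\cap Z)\rangle$, and minimality forces $W_2\cap Z=Z$. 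The four choices of $W_2$ --- removing the points of $S$ off $\ell$ and off $Z$; removing the components of $Z$ disjoint from $\ell\cup S_2$; replacing a component $Z_Q$, $Q\in S_2$, by the reduced point $Q$; replacing the components of $Z$ supported on $\ell$ by their traces on $\ell$ --- are precisely what disposes of the non-reduced structure of $Z$, i.e.\ of the ``delicate point'' you flag and then skip.

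Second, the uniqueness of $\mathcal{S}_2$. Your argument --- that the off-curve part ``is determined by $P$ and $C_d$'', hence independent of the choices --- is circular: the line $\ell$, hence $C_d$, and the vanishing $h^1(\mathcal{I}_{\mathrm{Res}_\ell (W)}(d-1))=0$ are all constructed from one particular pair $(\mathcal{S},\mathcal{Z})$, whereas uniqueness means that a different admissible pair $(\mathcal{S}',\mathcal{Z}')$, with its possibly different line $\ell'$, yields the same set $\mathcal{S}_2$. Nothing in your proposal compares two pairs. The paper needs a genuinely separate argument here (its step (e)): assuming $\ell'\ne \ell$, since two distinct lines share at most one point it derives $\deg (S_1)-1\le \sharp (S'_2)$, while the constraints $\deg (S_1)+\deg (Z_1)\ge d+2$, $\deg (S_1)>\deg (Z_1)$ and $\sbr (P)+\sr (P)\le 2d+1$ give $\deg (S_1)\ge (d+3)/2$ and $\sharp (S'_2)\le (d-1)/2$, a contradiction; so all pairs determine the same line, and only then is $\mathcal{S}_2$, identified in step (f) as the union of the connected components of $Z$ not contained in $\ell$, well defined independently of the pair.
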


The existence of such a  scheme $\mathcal{Z}$  was known from 
\cite{bgi} and \cite{bgl} (see Remark \ref{Z}). The assumption 
``~$\sbr(P)+\sr(P)\leq 2d+1$~''  is sharp (see Example \ref{00}).

In the language of polynomials, Theorem \ref{a1} can be rephrased as follows.

\begin{corollary}\label{corpoly} Let $F\in K[x_0, \ldots , x_m]_d$ be such that
$\sbr(F)+\sr(F)\leq 2d+1$ and $\sbr(F)<\sr(F)$. Then there are an integer $t \ge 0$, linear forms $L_1, L_2,M_1, \ldots , 
M_t\in K[x_0, \ldots , x_m]_1$,  and a form $Q\in K[L_1,L_2]_d$ such that 
$F=Q+M_1^d+\cdots + M_t^d$, $t \le \sbr(F)+\sr(F)-d-2$, and $\sr(F)=\sr(Q)+t$.
Moreover $t$, $M_1,\dots , M_t$ and the linear span of $L_1, L_2$ are 
uniquely determined by $F$. \end{corollary}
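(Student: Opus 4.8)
The plan is to derive Corollary \ref{corpoly} from Theorem \ref{a1}, reading the geometric splitting of $\mathcal{S}$ and $\mathcal{Z}$ as a decomposition of the polynomial. First I set $P=[F]\in\PP N$, so that the two hypotheses $\sbr(F)<\sr(F)$ and $\sbr(F)+\sr(F)\le 2d+1$ are precisely those of Theorem \ref{a1}. Since the points of $X_{m,d}$ are the classes $[L^d]$ of $d$-th powers of linear forms, a reduced scheme $\mathcal{S}=\{[N_1^d],\dots,[N_r^d]\}$ realizing $\sr(P)=r$ with $P\in\langle\mathcal{S}\rangle$ corresponds, after absorbing scalars (possible as $K$ is algebraically closed of characteristic $0$), to a minimal expression $F=N_1^d+\cdots+N_r^d$. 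Likewise $C_d=\nu_d(\ell)$ for a line $\ell\subset\PP m=\mathbb{P}(V^*)$, and I fix linear forms $L_1,L_2$ with $\ell=\mathbb{P}(\langle L_1,L_2\rangle)$, so that $[N^d]\in C_d$ exactly when $N\in\langle L_1,L_2\rangle$. Invoking the existence of a scheme $\mathcal{Z}$ as in Remark \ref{Z}, I then apply Theorem \ref{a1}.

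Next I would read off the decomposition. By Theorem \ref{a1}, $\mathcal{S}=\mathcal{S}_1\sqcup\mathcal{S}_2$ with $\mathcal{S}_1=\mathcal{S}\cap C_d$; relabelling the $N_i$ so that $\mathcal{S}_1$ consists of those $N_i\in\langle L_1,L_2\rangle$ and renaming the remaining forms $M_1,\dots,M_t$ (with $t=\deg\mathcal{S}_2$), and putting $Q:=\sum_{[N_i^d]\in\mathcal{S}_1}N_i^d$, I obtain $Q\in K[L_1,L_2]_d$ and $F=Q+M_1^d+\cdots+M_t^d$. For the numerical bound, disjointness of the unions together with $\deg\mathcal{Z}=\sbr(P)$ (the ``Moreover'' part of Theorem \ref{a1}) gives $\deg\mathcal{S}_1=\sr(F)-t$ and $\deg\mathcal{Z}_1=\sbr(F)-t$; combined with the defining inequality $\deg((\mathcal{S}\cup\mathcal{Z})\cap C_d)\ge d+2$ and the length estimate $\deg((\mathcal{S}\cup\mathcal{Z})\cap C_d)\le\deg\mathcal{S}_1+\deg\mathcal{Z}_1$ on the smooth curve $C_d$, this yields $\deg\mathcal{S}_1+\deg\mathcal{Z}_1\ge d+2$, i.e. $2t\le\sr(F)+\sbr(F)-d-2$, whence a fortiori $t\le\sbr(F)+\sr(F)-d-2$. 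Finally, since $\mathcal{S}_1$ spans $Q$ one has $\sr(Q)\le\deg\mathcal{S}_1$, so that
$$\sr(F)\le\sr(Q)+t\le\deg\mathcal{S}_1+t=\sr(F),$$
forcing equality throughout and hence $\sr(F)=\sr(Q)+t$.

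For the uniqueness, Theorem \ref{a1} guarantees that $\mathcal{S}_2$ is unique; this makes $t=\deg\mathcal{S}_2$ and the summands $M_1^d,\dots,M_t^d$ unique, and therefore $Q=F-(M_1^d+\cdots+M_t^d)$ is determined by $F$. It remains to see that $\langle L_1,L_2\rangle$ is intrinsic, and this is the step I expect to require the most care: I must exclude that $Q$ degenerates to a single power. Here I would note that $\sbr(F)<\sr(F)$ and $\sbr(F)+\sr(F)\le 2d+1$ force $\sbr(F)\le d$, hence $\deg\mathcal{Z}_1\le\sbr(F)\le d$; combined with the inequality $\deg\mathcal{S}_1+\deg\mathcal{Z}_1\ge d+2$ established above, this gives $\deg\mathcal{S}_1\ge 2$, i.e. $\sr(Q)\ge 2$. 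A binary form of rank at least $2$ is not the $d$-th power of a single linear form, so its minimal space of essential variables is exactly $2$-dimensional and equal to $\langle L_1,L_2\rangle$; being the smallest subspace $W\subseteq V^*$ with $Q\in S^dW$, this space is canonically attached to $Q$ and thus to $F$. Consequently $\langle L_1,L_2\rangle$ is uniquely determined by $F$, which completes the plan; the translation of the first paragraph and the numerical bookkeeping are routine, and the crux is precisely this verification that $\sr(Q)\ge 2$.
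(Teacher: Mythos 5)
Your proposal is correct and is essentially the paper's own argument: the paper gives no separate proof of Corollary \ref{corpoly}, presenting it as a direct translation of Theorem \ref{a1} with $\langle L_1,L_2\rangle$ corresponding to the line $\ell$ (so that $C_d=\nu_d(\ell)$) and $t=\sharp(\mathcal{S}_2)$, which is exactly the dictionary you set up. The details you supply beyond the paper --- the count $2t\le \sbr(F)+\sr(F)-d-2$ coming from $\deg(\mathcal{S}_1)+\deg(\mathcal{Z}_1)\ge d+2$, the sandwich argument forcing $\sr(F)=\sr(Q)+t$, and the verification $\sr(Q)\ge 2$ identifying $\langle L_1,L_2\rangle$ with the space of essential variables of $Q$ --- are faithful elaborations of bookkeeping the paper leaves implicit.
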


An analogous corollary can be stated for symmetric tensors.

\begin{corollary}\label{cortens} Let $T\in S^dV^*$ be such that
$\sbr(T)+\sr(T)\leq 2d+1$ and $\sbr(T)<\sr(T)$. Then there are an integer $t\ge 0$, vectors $v_1, v_2,w_1, \ldots , 
w_t\in S^1V^*$, and a symmetric tensor $v\in S^d(\langle v_1, v_2 \rangle)$ such that
$T=v+w_1^{\otimes d}+\cdots + w_t^{\otimes d}$, $t\le \sbr(T)+\sr(T)-d-2$, and $\sr(T)=\sr(v)+t$.
Moreover $t$, $w_1,\dots ,w_t$ and   $\langle v_1, v_2\rangle $ are 
uniquely determined by $T$. \end{corollary}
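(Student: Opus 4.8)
The plan is to deduce the statement directly from Corollary \ref{corpoly} by transporting it through the canonical linear isomorphism between symmetric tensors and homogeneous polynomials recalled in the Introduction. I would first fix the identification $\iota\colon S^dV^*\to K[x_0,\dots,x_m]_d$, an isomorphism of vector spaces that sends a decomposable symmetric tensor $v^{\otimes d}$, with $v\in V^*$ viewed as a linear form $L\in K[x_0,\dots,x_m]_1$, to the pure power $L^d$. The crucial point, which I would record explicitly, is that $\iota$ is compatible with the Veronese geometry: it identifies the affine cone over $X_{m,d}$ with the locus of rank-one symmetric tensors, and hence identifies every secant variety $\sigma_s(X_{m,d})$ with itself under the corresponding projective identification $\mathbb{P}(S^dV^*)\cong \PP N$. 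From this compatibility it follows at once that $\iota$ preserves both invariants in play, namely $\sr(T)=\sr(\iota(T))$ and $\sbr(T)=\sbr(\iota(T))$.

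I would next observe that $\iota$ also matches the relevant subspaces. For any $v_1,v_2\in V^*$ corresponding to linear forms $L_1,L_2$, the isomorphism sends the line $\langle v_1,v_2\rangle$ to $\langle L_1,L_2\rangle$ and restricts to a bijection $\iota\bigl(S^d(\langle v_1,v_2\rangle)\bigr)=K[L_1,L_2]_d$; this is immediate, since both subspaces are spanned by the elements $v^{\otimes d}$ (respectively $L^d$) as $v$ (respectively $L$) runs over the two-dimensional space. With these compatibilities at hand the argument becomes a pure transport of structure. Setting $F:=\iota(T)$, the hypotheses $\sbr(T)+\sr(T)\le 2d+1$ and $\sbr(T)<\sr(T)$ become exactly the hypotheses of Corollary \ref{corpoly} for $F$, which supplies an integer $t\ge 0$, linear forms $L_1,L_2,M_1,\dots,M_t$ and a form $Q\in K[L_1,L_2]_d$ with $F=Q+M_1^d+\cdots+M_t^d$, $t\le \sbr(F)+\sr(F)-d-2$ and $\sr(F)=\sr(Q)+t$. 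Letting $v_1,v_2,w_1,\dots,w_t\in S^1V^*$ be the vectors corresponding to $L_1,L_2,M_1,\dots,M_t$ and setting $v:=\iota^{-1}(Q)\in S^d(\langle v_1,v_2\rangle)$, applying $\iota^{-1}$ to this relation yields $T=v+w_1^{\otimes d}+\cdots+w_t^{\otimes d}$; the inequality $t\le\sbr(T)+\sr(T)-d-2$ and the equality $\sr(T)=\sr(v)+t$ then follow because $\iota$ preserves every quantity appearing in them.

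Uniqueness transports in the same way: the uniqueness of $t$, of the set $\{M_1,\dots,M_t\}$, and of the span $\langle L_1,L_2\rangle$ granted by Corollary \ref{corpoly} passes, under the bijection $\iota$, to the uniqueness of $t$, of $\{w_1,\dots,w_t\}$, and of $\langle v_1,v_2\rangle$. I expect no genuine obstacle here: the only step requiring real care is the claim that $\iota$ matches symmetric border rank with symmetric border rank, which rests on verifying that $\iota$ identifies $X_{m,d}$ with the rank-one locus and therefore preserves membership in each $\sigma_s(X_{m,d})$. Everything else is a formal consequence of the linearity of $\iota$ and of its compatibility with $d$-th powers.
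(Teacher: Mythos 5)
Your proposal is correct and is essentially the paper's own route: the paper states Corollary \ref{cortens} as the direct analogue of Corollary \ref{corpoly} under the standard identification of $S^dV^*$ with $K[x_0,\ldots,x_m]_d$ recalled in the Introduction (which matches $v^{\otimes d}$ with $L^d$, hence the Veronese, its secant varieties, $\sr$, $\sbr$, and $S^d(\langle v_1,v_2\rangle)$ with $K[L_1,L_2]_d$), exactly the transport of structure you spell out. Your write-up merely makes explicit the compatibilities the paper leaves implicit, so there is nothing to correct.
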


Observe that
the variables $L_1,L_2$ in Corollary \ref{corpoly} and the vectors 
$v_1,v_2$ in Corollary \ref{cortens} correspond to the line
$\ell\subset \PP m$ such that $C_d:= \nu _d(\ell)$ is the rational normal 
curve introduced in Theorem
\ref{a1}.  Moreover the integer $t$  in Corollaries \ref{corpoly} and 
\ref{cortens} is $\sharp (\mathcal {S}_2)$ where
$\mathcal{S}_2$ is as in Theorem \ref{a1}.
The decompositions $Q = R_1^d+\cdots +R_{r'}^d$ 
with $R_i\in K[L_1,L_2]_1$, are not unique (analogously the 
decompositions $v=u_1^{\otimes d} + \cdots + u_{r'}^{\otimes d}$ with 
$u_i\in \langle v_1, v_2 \rangle$), but one of them may be found using
Sylvester's algorithm or any of the available algorithms (\cite{cs}, 
\cite{lt}, \cite{bgi}). Unfortunately, given $F$ as in  Corollary 
\ref{corpoly} ($T$ as in Corollary \ref{cortens} respectively) we do not 
have any explicit algorithm to find $M_1,\dots ,M_t\in K[x_0, \ldots , x_m]_d$ and hence 
$Q\in K[L_1, L_2]_d$ ($w_1, \ldots , w_t\in S^1V^*$ and $v\in 
S^d(\langle v_1 , v_2 \rangle )$ respectively).

Using Theorem \ref{a1}  and a related lemma (Lemma \ref{dd}) it is 
also possible to address the question on the uniqueness of the 
decomposition (\ref{FT}).

\begin{theorem}\label{oo}
Assume $d\ge 5$. Fix a finite set $B\subset \mathbb {P}^m$ such
that $\rho:= \sharp (B) \le d$ and no subset of it with
cardinality $\lfloor (d+1)/2\rfloor$ is collinear. Fix $P\in
\langle \nu _d(B)\rangle$ such that $P\notin \langle \mathcal{E}\rangle$
for any $\mathcal{E}\subsetneqq \nu _d(B)$. Then $\sr (P)=\sbr (P) =\rho$
and $\nu _d(B)$ is the only 0-dimensional scheme $\mathcal {Z}
\subset X_{m,d}$ such that $\deg (\mathcal {Z})\le \rho$ and $P\in
\langle \mathcal {Z}\rangle$.
\end{theorem}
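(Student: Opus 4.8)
The plan is to prove the whole statement by establishing the uniqueness assertion first and deducing the two rank equalities from it. Since $B$ is reduced with $\#(B)=\rho$ and $P\in\langle\nu_d(B)\rangle$ lies in the span of no proper subset, $\nu_d(B)$ is an irredundant reduced scheme whose span contains $P$, so $\sbr(P)\le\sr(P)\le\rho$. I would then show that any $0$-dimensional scheme $\mathcal{Z}\subset X_{m,d}$ with $\deg(\mathcal{Z})\le\rho$ and $P\in\langle\mathcal{Z}\rangle$ must coincide with $\nu_d(B)$. Granting this, the smoothable scheme of degree $\sbr(P)\le\rho$ whose span contains $P$ (it exists by Remark \ref{Z}) is forced to equal $\nu_d(B)$, whence $\sbr(P)=\rho$ and therefore $\sbr(P)=\sr(P)=\rho$.

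For the uniqueness, suppose $\mathcal{Z}=\nu_d(Z)\ne\nu_d(B)$ with $\deg(Z)\le\rho$; after replacing $\mathcal{Z}$ by a subscheme I may assume $P$ lies in no proper subscheme of $\mathcal{Z}$, which keeps $\deg(Z)\le\rho$ and $Z\ne B$. Writing $W=B\cup Z\subset\mathbb{P}^m$, one has $\deg(W)\le 2\rho\le 2d\le 2d+1$. The first step is to show $h^1(\mathbb{P}^m,\mathcal{I}_W(d))>0$: if it vanished, then $\langle\nu_d(B)\rangle\cap\langle\nu_d(Z)\rangle=\langle\nu_d(B\cap Z)\rangle$, and since $B\ne Z$ forces $B\cap Z\subsetneq B$, the point $P$ would lie in the span of a proper subscheme of $\nu_d(B)$, contradicting irredundancy. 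With $h^1(\mathcal{I}_W(d))>0$ and $\deg(W)\le 2d+1$ in hand, I would invoke the structural lemma (Lemma \ref{dd}, together with Theorem \ref{a1}) to produce a line $\ell\subset\mathbb{P}^m$ with $\deg(W\cap\ell)\ge d+2$ and, crucially, the splitting $B=B_1\sqcup B_2$, $Z=Z_1\sqcup B_2$ with $B_1=B\cap\ell$, $Z_1=Z\cap\ell$ and a common off-line part $B_2$.

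The endgame is then a short numerical argument. Set $b_1=\deg(B_1)=\#(B\cap\ell)$ and $z_1=\deg(Z_1)$. Since $W\cap\ell=B_1\cup Z_1$ lies on $\ell$, we get $b_1+z_1\ge\deg(W\cap\ell)\ge d+2$; and since $B$ and $Z$ share the same off-line part $B_2$, the inequality $\deg(Z)\le\rho=\deg(B)$ reads $z_1+\deg(B_2)\le b_1+\deg(B_2)$, i.e. $z_1\le b_1$. Combining these, $2b_1\ge b_1+z_1\ge d+2$, so $b_1\ge\lceil (d+2)/2\rceil$. On the other hand the hypothesis that no $\lfloor (d+1)/2\rfloor$ points of $B$ are collinear gives $\#(B\cap\ell)\le\lfloor (d+1)/2\rfloor-1$; as $\lceil (d+2)/2\rceil>\lfloor (d+1)/2\rfloor-1$ for every $d$, this is a contradiction. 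Hence no such $Z$ exists and $\nu_d(B)$ is unique.

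The main obstacle is the second step: extracting the line $\ell$ and, above all, the common-part splitting $B=B_1\sqcup B_2$, $Z=Z_1\sqcup B_2$ from the mere positivity of $h^1(\mathcal{I}_W(d))$. The bound $\deg(W)\le 2d+1$ is exactly what makes the rational normal curve $C_d=\nu_d(\ell)$ appear (a subscheme of degree $\ge d+2$ of a line forces $h^1$ on the ambient $\mathbb{P}^d$), while $d\ge5$ is what keeps the collinearity hypothesis meaningful and the structural lemma applicable. Reducing the global configuration to $C_d$, and verifying that the residual off-line parts of $B$ and $Z$ genuinely coincide, is the delicate point, and it is precisely here that Theorem \ref{a1} and Lemma \ref{dd} do the work; once the splitting is granted, the degree bookkeeping above closes the argument at once.
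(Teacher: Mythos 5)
Your proposal is correct and follows essentially the same route as the paper: reduce everything to the uniqueness assertion via Remark~\ref{Z}, pass to an irredundant $Z\ne B$, force $h^1(\mathcal{I}_{B\cup Z}(d))>0$ (this is Lemma~\ref{c0}), extract the line $\ell$ and the common off-line part from the proof of Theorem~\ref{a1} (legitimate here exactly as in Lemma~\ref{dd}, whose proof notes that only $\deg(\mathcal{Z})\le\deg(\mathcal{S})$, $\mathcal{Z}\ne\mathcal{S}$ and irredundancy are used, not that the schemes compute the ranks), and close with the same degree count $2\deg(B\cap\ell)\ge d+2$ against the collinearity hypothesis. The only difference is organizational: the paper bifurcates into the cases $\deg(Z)<\rho$ (small part of the proof of Theorem~\ref{a1}) and $\deg(Z)=\rho$ (Lemma~\ref{dd}), whereas you handle both uniformly, which is a harmless streamlining.
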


Unfortunately, for a given $P\in \mathbb {P}^N$ that satisfies the 
hypothesis of Theorem \ref{oo} we are not able to give 
explicitly the set $B$. Knowing the uniqueness of a decomposition is 
very interesting both from the applications  and the pure 
mathematical point of view, but
very few results are known.
Theorem \ref{oo} is an extension of \cite{bgl} with an additional
assumption. It is worth noting that without some additional 
assumption \cite{bgl}, Theorem 1.2.6, cannot be extended (e.g., it
is sharp when $m=1$). We give an example showing that if $m=2$,
then Theorem \ref{oo} is sharp (see Example \ref{o+}), even taking
$B$ in linearly general position.

\section{Preliminaries}\label{sec1}

In this section we prove two auxiliary lemmas that will be crucial in 
the proof of the main result of this paper. Theorems \ref{a1} and 
\ref{oo} are well-known if $m=1$ since Sylvester. Hence we may assume 
that $m\ge 2$.

\begin{definition}\label{def}
We  say that a smoothable 0-dimensional scheme $\mathcal{Z}\subset 
X_{m,d}$ computes  the symmetric border rank $\sbr(P)$ of $P\in \mathbb{P}^N$ if $\deg (\mathcal {Z}) = \sbr (P)$
and $P\in \langle \mathcal{Z} \rangle$.
A reduced 0-dimensional scheme
$\mathcal{S} \subset X_{m,d}$
computes the symmetric rank $\sr(P)$ of $P\in \mathbb{P}^N$ if $\sharp (\mathcal {S}) =\sr (P)$
and $P\in \langle \mathcal{S} \rangle$.
\end{definition}

By the definition of symmetric
rank, if $\mathcal {S}$ computes $\sr(P)$, then $P\notin \langle \mathcal{S}' 
\rangle$ for any reduced  0-dimensional scheme $\mathcal{S}' \subset 
X_{m,d}$ with $\deg(\mathcal{S}')< \deg(\mathcal{S})$. Hence $\mathcal {S}$ is linearly
independent.

\begin{lemma}\label{c0}
Fix any $P\in \PP r$ and two 0-dimensional subschemes $A$, $B$ of 
$\PP r$ such that $A \ne B$, $P\in \langle A\rangle$,
$P\in \langle B\rangle$, $P\notin \langle A'\rangle$ for any 
$A'\subsetneqq A$ and $P\notin \langle B'\rangle$ for any 
$B'\subsetneqq B$.
Then $h^1(\PP r,\mathcal {I}_{A\cup B}(1)) >0$.
\end{lemma}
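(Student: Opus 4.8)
The plan is to translate the cohomological claim into linear algebra and then argue by contradiction. The dictionary I would use is that, for a $0$-dimensional scheme $W\subset \PP r$, the sequence $0\to \mathcal{I}_W(1)\to \mathcal{O}_{\PP r}(1)\to \mathcal{O}_W(1)\to 0$ together with $h^1(\mathcal{O}_{\PP r}(1))=0$ yields $h^1(\mathcal{I}_W(1))=\deg(W)-1-\dim\langle W\rangle$; in particular $h^1(\mathcal{I}_W(1))=0$ exactly when $W$ is linearly independent, i.e. $\dim\langle W\rangle=\deg(W)-1$. So I would suppose, for contradiction, that $h^1(\mathcal{I}_{A\cup B}(1))=0$, where $A\cup B$ is the scheme-theoretic union (ideal $\mathcal{I}_A\cap\mathcal{I}_B$), with the goal of contradicting the minimality of $A$ and of $B$.

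First I would note that any subscheme of a linearly independent $0$-dimensional scheme is again linearly independent: for $Y\subseteq W$ the restriction $\mathcal{O}_W(1)\twoheadrightarrow\mathcal{O}_Y(1)$ is surjective on global sections, so surjectivity of the evaluation $H^0(\mathcal{O}_{\PP r}(1))\to H^0(\mathcal{O}_W(1))$ forces surjectivity of $H^0(\mathcal{O}_{\PP r}(1))\to H^0(\mathcal{O}_Y(1))$. Hence, under the contradiction hypothesis, each of $A$, $B$, $A\cap B$ (scheme-theoretic intersection, ideal $\mathcal{I}_A+\mathcal{I}_B$) and $A\cup B$ is linearly independent.

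The core of the proof is the identity $\langle A\rangle\cap\langle B\rangle=\langle A\cap B\rangle$. To obtain it I would combine three facts: the Grassmann formula $\dim(\langle A\rangle+\langle B\rangle)=\dim\langle A\rangle+\dim\langle B\rangle-\dim(\langle A\rangle\cap\langle B\rangle)$; the equality $\langle A\cup B\rangle=\langle A\rangle+\langle B\rangle$, which holds because a hyperplane contains $A\cup B$ iff it contains both $A$ and $B$, so that $H^0(\mathcal{I}_{A\cup B}(1))=H^0(\mathcal{I}_A(1))\cap H^0(\mathcal{I}_B(1))$; and the length count $\deg(A\cup B)=\deg(A)+\deg(B)-\deg(A\cap B)$ coming from $0\to\mathcal{O}_{A\cup B}\to\mathcal{O}_A\oplus\mathcal{O}_B\to\mathcal{O}_{A\cap B}\to 0$. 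Feeding the four relations $\dim\langle W\rangle=\deg(W)-1$ into the Grassmann formula gives $\dim(\langle A\rangle\cap\langle B\rangle)=\deg(A\cap B)-1=\dim\langle A\cap B\rangle$, and since $\langle A\cap B\rangle\subseteq\langle A\rangle\cap\langle B\rangle$ always, the two spaces coincide.

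To finish, I would use $P\in\langle A\rangle\cap\langle B\rangle=\langle A\cap B\rangle$. Since $A\ne B$, the scheme $A\cap B$ cannot equal both $A$ and $B$, hence it is a proper subscheme of at least one of them; then $P\in\langle A\cap B\rangle$ contradicts either $P\notin\langle A'\rangle$ for $A'\subsetneqq A$ or $P\notin\langle B'\rangle$ for $B'\subsetneqq B$. (If $A\cap B=\emptyset$ the same count forces $\langle A\rangle\cap\langle B\rangle=\emptyset$, immediately contradicting $P\in\langle A\rangle\cap\langle B\rangle$.) I expect the only genuinely delicate point to be the scheme-theoretic bookkeeping behind the intersection-of-spans identity—justifying $\langle A\cup B\rangle=\langle A\rangle+\langle B\rangle$ and the length additivity when $A$ and $B$ are non-reduced—while the reduced-points picture makes the concluding contradiction essentially immediate.
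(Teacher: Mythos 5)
Your proof is correct and is essentially the paper's own argument run in contrapositive form: both rest on the same Grassmann-plus-degree count showing that linear independence of $A\cup B$ forces $\langle A\rangle\cap\langle B\rangle=\langle A\cap B\rangle$, after which $P\in\langle A\rangle\cap\langle B\rangle$ contradicts the minimality of $A$ or of $B$. The only cosmetic difference is that the paper states the key step as an equivalence ($h^1(\mathcal{I}_{A\cup B}(1))>0$ if and only if $\langle A\cap B\rangle\subsetneqq\langle A\rangle\cap\langle B\rangle$) rather than deriving a contradiction, and it gets the linear independence of $A$ and $B$ from the inequality $h^1(\mathcal{I}_{A\cup B}(1))\ge\max\{h^1(\mathcal{I}_A(1)),h^1(\mathcal{I}_B(1))\}$ instead of your subscheme argument.
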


\begin{proof}
Since $A$ and $B$ are 0-dimensional, 
$h^1(\PP r,\mathcal {I}_{A\cup B}(1)) \ge
\max \{h^1(\PP r,\mathcal {I}_A(1)),
h^1(\PP r,\mathcal {I}_B(1))\}$. Thus we may assume $
h^1(\PP r,\mathcal {I}_A(1))=
h^1(\PP r,\mathcal {I}_B(1))=0$, i.e. $\dim (\langle A\rangle )=\deg 
(A)-1$ and $\dim (\langle B\rangle )= \deg (B)-1$.
Set $D:= A\cap B$ (scheme-theoretic intersection). Thus $\deg (A\cup 
B) = \deg (A)+\deg (B)-\deg (D)$.
Since $D\subseteq A$ and $A$ is linearly independent, we have $\dim 
(\langle D\rangle ) =\deg (D)-1$.
Since $h^1(\mathbb {P}^r,\mathcal {I}_{A\cup B}(1))>0$ if and only if
$\dim (\langle A\cup B\rangle )\le \deg (A\cup B)-2$, we get 
$h^1(\mathbb {P}^r,\mathcal {I}_{A\cup B}(1))>0$
if and only if $\langle D\rangle \subsetneqq \langle A\rangle \cap 
\langle B\rangle$.
Since $A \ne B$, then $D\subsetneqq A$. Hence $P\notin \langle 
D\rangle$. Since $P\in  \langle A\rangle\cap  \langle B\rangle$, we 
are done.
\end{proof}

The next observation shows the existence of the scheme $\mathcal 
{Z}\subset X_{m,d}$ that   computes the symmetric border rank of a 
point $P\in \mathbb{P}^N$ that satisfies the conditions of
Theorem \ref{a1}.

\begin{remark}\label{Z}
Fix integers $m \ge 1$, $d \ge 2$ and $P\in \PP N$ such that $\sbr 
(P) \le d+1$. By \cite{bgl},
Lemma 2.1.5, or \cite{bgi}, Proposition 11, there is a 
smoothable 0-dimensional scheme $\mathcal{E} \subset X_{m,d}$ such that $\deg (\mathcal{E}) \le 
\sbr(P)$ and $P\in \langle \mathcal{E}\rangle$.
Moreover,
$\sbr (P)$ is the minimal of the degrees of any such smoothable scheme $\mathcal{E}$.
\end{remark}

In the statement of Theorem \ref{a1} we claimed  the existence of a 
unique rational normal curve $C_d\subset X_{m,d}$  such that $\deg ((\mathcal{S}\cup
\mathcal{Z})\cap C_d) \ge d+2$. This will be a consequence of the following lemma where 
the line $\ell\subset \mathbb{P}^m$ and the scheme $W\subset 
\mathbb{P}^m$ will be used in the proof of Theorem \ref{a1} with
$\nu_d(\ell)=C_d$, while as $\nu _d(W)$ we will take several different schemes associated to $\mathcal{S}\cup
\mathcal{Z}$.

\begin{lemma}\label{p1}
Fix an integer $x\ge 1$. Let $W \subset \PP m$,  $m
\ge 2$, be a 0-dimensional scheme of degree $\deg(W) \le 2x+1$
and such that $h^1(\PP m,\mathcal {I}_W(x)) >0$. Then there is a unique
line $\ell\subset \PP m$ such that $\deg(\ell\cap W)\ge x+2$ and
$$\deg(W\cap \ell)=x+1+h^1(\PP m,\mathcal {I}_W(x)).$$\end{lemma}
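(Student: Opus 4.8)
The plan is to split the statement into the two easy assertions---uniqueness of $\ell$ and the equality $\deg(W\cap \ell)=x+1+h^1(\PP m,\mathcal{I}_W(x))$---and the substantive one, namely the existence of a line meeting $W$ in degree at least $x+2$. Uniqueness is immediate from the degree bound: if $\ell_1\ne \ell_2$ both satisfied $\deg(W\cap \ell_i)\ge x+2$, then since two distinct lines meet in a scheme of degree at most $1$ we would get $\deg(W)\ge \deg(W\cap\ell_1)+\deg(W\cap\ell_2)-1\ge 2x+3$, contradicting $\deg(W)\le 2x+1$.

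Granting existence, the degree formula follows from residuation. Write $W'$ for the residual scheme $\mathrm{Res}_\ell(W)$, cut out by the ideal quotient $(\mathcal{I}_W:\mathcal{I}_\ell)$, so that $\deg(W')=\deg(W)-e$ with $e:=\deg(W\cap \ell)\ge x+2$; hence $\deg(W')\le (2x+1)-(x+2)=x-1$. A $0$-dimensional scheme of degree at most $x$ imposes independent conditions on forms of degree $x-1$, so $h^1(\PP m,\mathcal{I}_{W'}(x-1))=0$. Feeding this into the cohomology sequence of $0\to \mathcal{I}_{W'}(x-1)\to \mathcal{I}_W(x)\to \mathcal{I}_{W\cap \ell,\ell}(x)\to 0$ identifies $h^1(\PP m,\mathcal{I}_W(x))$ with $h^1(\ell,\mathcal{O}_\ell(x-e))=e-x-1$, which rearranges to $\deg(W\cap \ell)=x+1+h^1(\PP m,\mathcal{I}_W(x))$.

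The core is existence, which I would prove by induction on $x$ (the case $x=1$ being elementary: $\deg(W)\le 3$ together with $h^1(\PP m,\mathcal{I}_W(1))>0$ forces $W$ to lie on a line). For the inductive step, first observe that $h^1(\PP m,\mathcal{I}_W(x))>0$ forces $\deg(W)\ge x+2$, again because degree at most $x+1$ would impose independent conditions; in particular $\deg(W)\ge 3$, so some line meets $W$ in degree at least $2$. Choose a line $\ell$ with $e:=\deg(W\cap \ell)$ maximal. If $e\ge x+2$ we are done, so assume $e\le x+1$. Then the restriction term $h^1(\ell,\mathcal{O}_\ell(x-e))$ vanishes, so the above sequence gives $h^1(\PP m,\mathcal{I}_{W'}(x-1))\ge h^1(\PP m,\mathcal{I}_W(x))>0$; since $e\ge 2$ we have $\deg(W')\le 2(x-1)+1$, and the inductive hypothesis yields a line $\ell'$ with $\deg(W'\cap \ell')\ge x+1$.

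The hard part is to turn this into a contradiction, and I expect it to require a case analysis according to whether $\ell'=\ell$. If $\ell'\ne \ell$, then $W'\subseteq W$ gives $\deg(W\cap \ell')\ge x+1$, which by maximality of $e$ forces $e=\deg(W\cap \ell')=x+1$ and $\deg(W)=2x+1$, so that $W$ is supported on the plane conic $\ell\cup \ell'$ with a length-$(x+1)$ part on each component; a direct computation of $h^1$ for a degree-$(2x+1)$ subscheme of a conic then shows it vanishes, contradicting $h^1(\PP m,\mathcal{I}_W(x))>0$. If instead $\ell'=\ell$, then $W$ is heavily non-reduced along $\ell$ (its residual still meets $\ell$ in degree at least $x+1$), and one iterates the residuation along $\ell$, controlling the $h^1$ at each step. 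Keeping track of the cohomology through these residuations---and in particular verifying that peeling off the contribution of $\ell$ never drops $h^1(\PP m,\mathcal{I}_W(x))$ below the value needed to reapply the inductive hypothesis---is where the assumption $\deg(W)\le 2x+1$ is used most sharply, and this bookkeeping is the main obstacle.
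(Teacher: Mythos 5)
There are two genuine gaps here, one of substance and one of technique.

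The substantive gap is the existence of $\ell$, which you correctly identify as the core of the statement and then do not prove: your induction on $x$ ends with an openly unfinished case (the case $\ell'=\ell$, where you say the ``bookkeeping is the main obstacle''). The paper does not prove existence from scratch either --- it simply invokes \cite{bgi}, Lemma 34, for the existence of a line with $\deg(\ell\cap W)\ge x+2$, and spends its effort only on uniqueness and on the degree formula. So your proposal, read as a self-contained argument, is incomplete at exactly the point where the paper leans on a citation; either finish that induction or cite the known lemma, but as written there is a hole.

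The technical gap is that your residuation is done with respect to the line $\ell$ inside $\PP m$ for arbitrary $m\ge 2$. The exact sequence $0\to\mathcal{I}_{\mathrm{Res}_\ell(W)}(x-1)\to\mathcal{I}_W(x)\to\mathcal{I}_{W\cap\ell,\ell}(x)\to 0$, and the degree count $\deg(\mathrm{Res}_\ell(W))=\deg(W)-\deg(W\cap\ell)$, are valid only when $\ell$ is a Cartier divisor, i.e.\ only when $m=2$: in general the kernel of the restriction map $\mathcal{I}_W(x)\to\mathcal{I}_{W\cap\ell,\ell}(x)$ is $\mathcal{I}_{W\cup\ell}(x)$, and identifying it with $\mathcal{I}_{\mathrm{Res}_\ell(W)}(x-1)$ requires multiplying by a single equation of $\ell$, which does not exist when $\ell$ has codimension $\ge 2$. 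This flaw infects both your proof of the formula $\deg(W\cap\ell)=x+1+h^1(\PP m,\mathcal{I}_W(x))$ and the inductive step of your existence attempt (where you deduce $h^1(\PP m,\mathcal{I}_{W'}(x-1))>0$ from the same sequence). The paper's proof is structured precisely to avoid this: it proves the formula by induction on $m$, doing the base case $m=2$ by residuation with respect to $\ell$ (legitimate there), and for $m\ge 3$ residuating with respect to a general hyperplane $H\supset\ell$ (which is Cartier) and applying the inductive hypothesis to $W\cap H\subset H\cong\mathbb{P}^{m-1}$. Your formula argument can be repaired by the same hyperplane-slicing device; your uniqueness argument, on the other hand, is correct and coincides with the paper's.
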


\begin{proof}
For the existence of the line $\ell\subset \PP m$ see \cite{bgi}, Lemma 34.
\\
Since $\deg(W) \le 2x+1$ and since the scheme-theoretic intersection 
of two different lines
has length at most one and $\deg (W) \le 2x+2$, there is no line 
$R\ne \ell$ such that $\deg (R\cap W) \ge x+2$. Thus $\ell$ is unique.
\\
We prove  the formula $\deg(W\cap \ell)=x+1+h^1(\mathcal {I}_W(x))$ by 
induction on $m$.

First assume $m=2$. In this case $\ell$ is a Cartier divisor of $\PP m$. 
Hence the residual scheme
$\mbox{Res}_\ell(W)$ of $W$ with respect to $\ell$ has degree 
$\deg(\mbox{Res}_\ell(W))=\deg(W) -\deg(W\cap \ell)$. The exact 
sequence that defines the residual scheme $\mbox{Res}_\ell(W)$ is:
\begin{equation}\label{eqa1}
0 \to \mathcal {I}_{\mbox{Res}_\ell(W)}(x-1) \to \mathcal {I}_W(x) \to 
\mathcal {I}_{W\cap \ell,\ell}(x) \to 0.
\end{equation}
  Since $\dim (\mbox{Res}_\ell(W))\le \dim (W) \le 0$ and $x-1\ge -2$, we 
have $h^2(\PP m,\mathcal {I}_{\mbox{Res}_\ell(W)}(x-1))=0$.
Since $\deg(W\cap \ell) \ge x+1$, we have $h^0(\ell,\mathcal {I}_{W\cap 
\ell}(x))=0$. Since $\deg(\mbox{Res}_\ell(W))
= \deg(W) -\deg(W\cap \ell)  \le x$, we obviously have $h^1(\PP m,\mathcal 
{I}_{\mbox{Res}_\ell(W)}(x-1))=0$ (this is also a particular case of 
\cite{bgi}, Lemma
34). Thus the cohomology exact sequence of (\ref{eqa1}) gives 
$h^1(\PP m,\mathcal {I}_W(x)) = \deg(W\cap \ell)-x-1 $. This proves the
lemma for $m=2$.

Now  assume $m\ge 3$ and that the result is true for $\mathbb {P}^{m-1}$.
Take a general hyperplane $H \subset \PP m$ containing $\ell$ and set 
$W':= W\cap \ell$. The inductive assumption gives $h^1(H,\mathcal 
{I}_{W'}(x)) = \deg(W'\cap \ell)-x-1 $. Since $\deg(\mbox{Res}_H(W)) \le 
x-1$, we get, as above, $h^1(\PP m,\mathcal {I}_{\mbox{Res}_H(W)}(x-1))= 0$.
Consider now the analogue exact sequence of (\ref{eqa1}) using $H$ 
instead of $\ell$:
$$0 \to \mathcal {I}_{\mbox{Res}_H(W)}(x-1) \to \mathcal {I}_W(x) \to 
\mathcal {I}_{W\cap H,H}(x) \to 0.$$
Since $W\cap \ell = W'\cap \ell$, we get, as above, that
  $h^1(\PP m,\mathcal {I}_W(x))= \deg(W\cap \ell)-x-1 $.
\end{proof}

\section{The proofs}\label{results}

In this section we prove Theorems \ref{a1} and  \ref{oo}.

\vspace{0.3cm}

\qquad {\emph {Proof of Theorem \ref{a1}.}} The existence of the 
smoothable scheme $\mathcal{Z}\subset X_{m,d}$ that computes $\sbr(P)$ is assured by  Remark \ref{Z}. Any such smoothable scheme
has degree $\sbr(P)$ (Remark \ref{Z}).
Let $S$ (resp. $Z$) be the only subset (resp. subscheme) of $\PP m$ such that
$\mathcal {S} = \nu _d(S)$ (resp. $\mathcal {Z} = \nu _d(Z)$). By hypothesis  $\sharp (S) = \sr (P)$ and $\deg (Z) = \sbr (P)$. Set $W:= S\cup 
Z$ and $\mathcal{W}:=\nu _d(W)$. We have $\deg (W) = \sr (P) + \sbr 
(P) \le 2d+1$. Let $\mathcal {T}$ be a minimal subscheme of $\mathcal {Z}$ such that
$P\in \langle \mathcal {T}\rangle$. Since $\deg (\mathcal {T}) \le \deg (\mathcal {Z}) < \deg (\mathcal {S})$,
we have $\mathcal {T} \ne \mathcal {S}$. Lemma \ref{c0}
applied to $r:=N$, $A:= \mathcal {T}$ and $B:= \mathcal {S}$ gives $h^1(\mathcal {I}_{\mathcal{T}\cup \mathcal {S}}(1))>0$.
Thus $h^1(\mathcal {I}_{\mathcal{W}}(1))>0$. Thus
$\dim (\langle \mathcal{W}\rangle )\le \mbox{deg}(\mathcal{W})-2$. Since
$\deg(\mathcal{W}) \le \deg(\mathcal{Z}) + \deg(\mathcal{S})= 
\sbr(P)+\sr(P) \le 2d+1$ and $h^1(\mathcal 
{I}_{\mathcal{W}}(1))=h^1(\mathbb {P}^m,\mathcal {I}_W(d))$, there 
is a unique line $\ell\subset \mathbb {P}^m$ whose image
$C_d:=\nu_{d}(\ell)$ in $X_{m,d}$ contains a subscheme of $\mathcal{W}$ with 
length at least $d+2$ (Lemma \ref{p1}). Since $C_d = \langle 
C_d\rangle \cap X_{m,d}$ (scheme-theoretic intersection), we
have $\mathcal {W}\cap C_d = \nu _d(W\cap \ell)$, $\mathcal {Z}\cap C_d 
= \nu _d(Z\cap \ell)$ and $\mathcal {S}\cap C_d = \nu _d(S\cap \ell)$.
\\

\quad (a)   Let $\mathcal{S}_1, \mathcal{S}_2\subset \mathcal{S}$ be 
as  defined in the statement
and set $\mathcal{S}_3:=\mathcal{S}\setminus (\mathcal{S}_1\cup 
\mathcal{S}_2)$. Let $S_3\subset \PP m$ be the
only subset such that $\mathcal {S}_3 = \nu _d(S_3)$.  Set $W':= 
W\setminus S_3$ and $\mathcal{W}':= \nu _d(W') =\mathcal{W}\setminus 
\mathcal{S}_3$. Notice that $W'$ is well-defined, because each point 
of $S_3$ is a connected component of the scheme $W$.

In this step we prove $S_3=\emptyset$, i.e. $\mathcal{S}_3 = \emptyset$.

Assume that this is not the case and that $\sharp (\mathcal{S}_3)>0$.
Lemma \ref{p1} gives $h^1(\PP m,\mathcal {I}_{W\cap \ell}(d)) =
  h^1(\PP N,\mathcal {I}_{\mathcal{W}}(1))$ and $h^0(\mathcal 
{I}_{\mathcal{W}}(1)) = h^0(\mathcal {I}_{C_d\cap
\mathcal{W}}(1)) -\deg (\mathcal{W})+\deg (\mathcal{W}\cap C_d)$. Hence we get
$$\dim (\langle \mathcal{W}\rangle )=\dim (\langle 
\mathcal{W}'\rangle )+\sharp (\mathcal{S}_3).$$
Now, by definition, we have that $\mathcal{S}\cap \mathcal{W}' = 
\mathcal{S}_1\cup \mathcal{S}_2$, $\mathcal{W} =\mathcal{W}'\sqcup 
\mathcal{S}_3$ and $\mathcal{Z}\cup \mathcal{S}_1\cup
\mathcal{S}_2= \mathcal{W}'$. Grassmann's formula gives $\dim( 
\langle \mathcal{W}'\rangle \cap \langle \mathcal{S}\rangle )
= \dim (\langle \mathcal{W}'\rangle )+\dim (\langle 
\mathcal{S}\rangle ) - \dim (\langle
\mathcal{W}'\cup \mathcal{S}\rangle ) = \dim (\langle 
\mathcal{S}\rangle ) -\sharp (\mathcal{S}_3)$.
Since $\mathcal {S}$ is linearly independent, we have $\dim (\langle \mathcal{S}_1 
\cup \mathcal{S}_2 \rangle)= \dim(\langle
\mathcal{S}
\rangle) -
\sharp (\mathcal{S}_3)$. Hence
$\dim( \langle \mathcal{S}_1\cup \mathcal{S}_2 \rangle)=\dim (\langle 
\mathcal{W}'\rangle \cap \langle \mathcal{S} \rangle)$;
since $\langle \mathcal{S}_1 \cup \mathcal{S}_2 \rangle \subseteq 
\langle \mathcal{W}'\rangle \cap \langle \mathcal{S} \rangle$
we get $\langle \mathcal{S}_1 \cup \mathcal{S}_2 \rangle = \langle 
\mathcal{W}' \rangle \cap \langle \mathcal{S}
\rangle$. Since $P\in \langle \mathcal{Z} \rangle \cap \langle 
\mathcal{S} \rangle \subseteq \langle \mathcal{W}' \rangle \cap
\langle \mathcal{S} \rangle = \langle \mathcal{S}_1 \cup 
\mathcal{S}_2 \rangle$, we get that $P\in \langle \mathcal{S}_1 \cup
\mathcal{S}_2\rangle$.
  Since we supposed that $\mathcal{S}\subset X_{m,d}$ is a set computing the 
symmetric rank of $P$, it is absurd that $P$ belongs to the span of a 
proper subset of $\mathcal{S}$, then necessarily 
$\sharp(\mathcal{S}_3)=0$, that is equivalent to the fact that 
$\mathcal{S}_3=\emptyset$.
  Thus in this step we have just proved $\mathcal {S} =\mathcal {S}_1\sqcup 
\mathcal {S}_2$.
\\
\\
In steps (b), (c) and (d) we will prove $\mathcal {Z} = (\mathcal 
{Z}\cap C_d)\sqcup \mathcal {S}_2$ in a very similar way (using $Z$ 
instead of $S$). In each of these
  steps we take a subscheme $W_2 \subset W$ such that $S\subset W_2$, 
$W_2\cap \ell = W\cap \ell$ and $W_2\cup Z = W$. Then we play with Lemma 
\ref{p1}. In steps (b) (resp. (c), resp. (d)) we call $W_2 = W''$ 
(resp. $W_2 = W_Q$, resp. $W_2=W_1$). Since $\deg (\nu _d(Z)) \le d+1$, the scheme
$\nu _d(Z)$ is linearly independent.
\\

\quad (b) Let $Z_4\subset \mathbb{P}^n$ be the union of the connected components of $Z$ 
which do not intersect $\ell\cup S_2$. Here we prove
$Z_4=\emptyset$. Set $W'':= W\setminus Z_4$. The scheme $W''$ is 
well-defined, because $Z_4$ is a union of some of the
connected components of $W$. Lemma 2 gives $\dim (\langle\nu _d(W)\rangle ) = 
\dim (\langle \nu _d(W'')\rangle ) + \deg (Z_4)$.
Since $W = W''\cup Z$, Grassmann's formula gives $\dim (\langle \nu 
_d(W''\cup Z)\rangle )= \dim (\langle \nu _d(W'')\rangle) +
\dim (\langle
\nu _d(Z)\rangle ) -\dim (\langle \nu _d(W'')\rangle \cap
\langle \nu _d(Z)\rangle )$. Thus $\dim (\langle \nu _d(Z)\rangle) = 
\dim (\langle \nu _d(W'')\rangle \cap
\langle \nu _d(Z)\rangle )+\deg (Z_4)$. Since $\nu _d(Z)$ is linearly 
independent and $Z = (Z\cap W'')\sqcup Z_4$, we get $\dim (\langle 
\nu _d(Z)\rangle) =
\dim (\langle \nu _d(Z\cap W'')\rangle )+\deg (Z_4)$. Thus $\dim 
(\langle \nu _d(W'')\rangle \cap
\langle \nu _d(Z)\rangle) = \dim (\langle \nu _d(Z\cap W'')\rangle)$. 
Since $\nu_d(W''\cap Z)\subseteq \langle \nu_d (W'')\cap \nu_d(Z) 
\rangle$, $\deg(\langle \nu _d(W'')\rangle \cap
\langle \nu _d(Z)\rangle)=\dim (\langle \nu _d(W''\cap Z) 
\rangle)+1$, and $\nu_d(W'')$ is linearly independent, then the 
linear space $\langle \nu _d(W'')\rangle \cap
\langle \nu _d(Z)\rangle$ is spanned by $\nu _d(W''\cap Z)$. Since $S 
\subseteq W''$ and $P\in \langle \nu _d(Z)\rangle \cap
\langle \nu _d(S)\rangle$, we have $P\in \langle \nu _d(W''\cap 
Z)\rangle$. Since $\nu_d(Z)$ computes $\sbr(P)$, we get
$W''\cap Z = Z$, i.e.
$Z_4=\emptyset$.\\

\quad (c) Here we prove that each point of $S_2$ is a connected 
component of $Z$. Fix $Q\in S_2$ and call $Z_Q$ the connected 
component
of $Z$ such that $(Z_Q)_{red} = \{Q\}$. Set $Z[Q]: = (Z\setminus 
Z_Q)\cup \{Q\}$ and $W_Q:= (W\setminus Z_Q)\cup \{Q\}$. Since
$Z_Q$ is a connected component of
$W$, the schemes $Z[Q]$ and
$W_Q$ are well-defined. Assume $Z_Q\ne \{Q\}$, i.e. $W_Q\ne W$, i.e. 
$Z[Q] \ne Z$. Since $W_Q\cap
\ell = W\cap \ell$, Lemma 2 gives
$\dim (\langle \nu _d(W)\rangle ) - \dim (\langle \nu _d(W_Q)\rangle 
)= \deg (Z_Q)-1>0$. Since $\nu _d(Z)$ is linearly independent, we 
have $\dim (\langle \nu _d(Z)
\rangle) = \dim (\langle \nu _d(Z[Q])\rangle) + \deg (Z_Q)-1$. 
Grassmann's formula gives $\dim (\langle \nu
_d(Z[Q])\rangle )= \dim (\langle \nu _d(W_Q)\rangle \cap \langle \nu 
_d(Z)\rangle )$. Since $\langle \nu _d(Z[Q])\rangle
\subseteq \langle \nu _d(W_Q)\rangle \cap \langle \nu _d(Z)\rangle$ 
and $Z[Q]$ is linearly independent, we get
$\langle \nu _d(Z[Q])\rangle = \langle \nu _d(W_Q)\rangle \cap 
\langle \nu _d(Z)\rangle$. Since $Q\in S_2\subseteq S$, we have
$S \subset W_Q$. Thus $P\in \langle \nu_d(W_Q)\rangle$. Thus $P \in \langle 
\nu _d(Z)\rangle \cap \langle \nu _d(W_Q)\rangle =
\langle \nu _d(Z[Q])\rangle$. Since $\mathcal {Z}$ computes $\sbr(P)$, $Z[Q]\subseteq Z$ and $P\in \langle  \nu
_d(Z[Q])\rangle$, we get $Z[Q] = Z$. Thus each point of $\mathcal 
{S}_2$ is a connected component of $\mathcal {Z}$.\\

\quad (d) To conclude that $Z =( Z\cap \ell )\sqcup S_2$ it is 
sufficient to prove that every connected component of $Z$ whose 
support is a point of $\ell$ is contained in $\ell$. Set $\eta := \deg 
(Z\cap \ell)$
and call $\mu$ the sum of the degrees of the connected components of 
$Z$ whose support is contained in $\ell$. \\
Set $W_1:= (W\cap \ell)\cup S_2$. Notice that $\deg (W_1) = \deg (W) 
+\eta -\mu$. Lemma 2 gives
$\dim (\langle \nu _d(W_1)\rangle ) = \dim  (\langle \nu _d(W)\rangle 
)+\eta -\mu$.
Since $W = W_1\cup Z$, Grassmann's formula gives $\dim (\langle \nu 
_d(W_1\cup Z)\rangle )= \dim (\langle \nu _d(W_1)\rangle ) + \dim 
(\langle \nu _d(Z)\rangle ) -\dim (\langle \nu _d(W_1)\rangle \cap
\langle \nu _d(Z)\rangle )$. Thus $\dim (\langle \nu _d(Z)\rangle) = 
\dim (\langle \nu _d(W_1)\rangle \cap
\langle \nu _d(Z)\rangle )+\mu -\eta$. Notice that $Z\cap W_1 = 
(Z\cap \ell) \sqcup S_2$, i.e. $\deg (Z\cap W_1) = \deg (Z) -\eta +\mu$. 
Since $\nu _d(Z)$ is linearly
independent, we get $\dim (\langle \nu _d(Z)\rangle) = \dim (\langle 
\nu _d(Z\cap W_1)\rangle ) +\mu -\eta$. Thus $\dim (\langle \nu 
_d(W_1)\rangle \cap
\langle \nu _d(Z)\rangle) = \dim (\langle \nu _d(Z\cap W_1)\rangle 
)$, i.e. $\langle \nu _d(W_1)\rangle \cap
\langle \nu _d(Z)\rangle$ is spanned by $\nu _d(W_1\cap Z)$. Since $S 
\subset W_1$ and $P\in \langle \nu _d(Z)\rangle \cap \langle \nu 
_d(S)\rangle$, we
have $P\in \langle \nu _d(W_1\cap Z)\rangle$. Since $Z$ computes the 
symmetric  border rank of $P$, we get $W_1\cap Z = Z$, i.e. $\eta = 
\mu$. Together with
steps (b) and (c) we get $Z = (Z\cap \ell) \sqcup S_2$. Thus from steps 
(b), (c) and (d) we get
$\mathcal {Z} = (\mathcal {Z}\cap C_d)\sqcup \mathcal {S}_2$.
\\

\quad (e) Here we prove the uniqueness of the rational normal
curve $C_d$. Notice that $\ell$ and $C_d=\nu _d(\ell)$ are uniquely 
determined by the choice of a pair $(Z,S)$ with $\nu _d(Z)$ computing 
$\sbr (P)$ and $\nu _d(S)$ computing $\sr (P)$. Fix
another pair $(Z',S')$ with $\nu _d(Z')$ computing $\sbr (P)$ and 
$\nu _d(S')$ computing $\sr (P)$. Let $\ell'$ be the line associated to 
$Z'\cup S'$. Assume $\ell'\ne \ell$.
First assume $S' = S$. The part of Theorem \ref{a1} proved before 
gives $Z = Z_1\sqcup S_2$, $Z' = Z'_1\sqcup
S'_2$ and $S = S'_1\sqcup S'_2$ with $Z_1=Z\cap \ell$, $Z'_1 = Z'\cap 
\ell'$, $S_1 = S\cap \ell$  and $S'_1 =S_1\cap \ell'$.
Now $\sbr
(P) = \deg (Z_1) +\sharp (S_2) = \deg (Z'_1)+\sharp (S'_2)$, $\sr (P) 
= \deg (S_1) +\sharp(S_2) = \deg(S'_1) +\sharp(S'_2)$,
$\deg(S_1) > \deg(Z_1)$, $\deg(S_1) +\deg(Z_1) \ge d+2$ and 
$\deg(S'_1) +\deg(Z'_1) \ge
d+2$. Since $\ell'\ne \ell$, at most one of the points of $S_1$ may be
contained in $\ell'$ and at most one of the points of $S'_1$ may be
contained in $\ell$. Thus $\deg(S'_1) -1 \le \sharp(S_2)$ and $\deg(S_1) 
-1 \le \sharp(S'_2)$.
Since $\deg(S_1) + \deg(Z_1) +2(\sharp(S_2)) = \deg(S'_1) +\deg(Z'_1) 
+2(\sharp(S'_2)) \le 2d+1$,
$\deg(S_1) +\deg(Z_1) \ge d+2$ and $\deg(S'_1) +\deg(Z'_1) \ge d+2$, 
we get $2(\sharp(S_2))
\le d-1$ and $2(\sharp(S'_2)) \le d-1$. Since $\deg(S_1) +\deg(Z_1) 
\ge d+2$ and
$\deg(S_1) >\deg(Z_1)$, we have $\deg(S_1) \ge (d+3)/2$. Hence $\deg(S_1) -1
\ge (d+1)/2 >(d-1)/2 \ge \sharp (S'_2)$, contradiction. Thus all 
pairs $(Z',S)$ give the same line $\ell$. Now assume $S' \ne S$. Call 
$\ell''$ the line associated
to the pair $(Z,S')$. The part of Theorem \ref{a1} proved in the 
previous steps gives that $\ell$ is the only line containing an 
unreduced connected component of $Z$.
Thus $\ell'' =\ell$. Since we proved that the lines associated to $(Z',S')$ 
and $(Z,S')$ are the same, we are done.\\

\quad (f) Here we prove the uniqueness of $\mathcal{S}_2$. Take any 
pair $(Z',S')$ with $\nu _d(Z')$ computing $\sbr (P)$ and $\nu 
_d(S')$ computing $\sr (P)$.  By step (e) the same line $\ell$
is associated to any pair $(Z'',S'')$ as above. Hence the set $S'_2 
:= S'\setminus (S'\cap \ell)$ associated to the pair $(Z,S')$  is the 
union of the connected components of $Z$ not contained in $\ell$. Thus 
$S'_2= S\setminus S\cap \ell
= S_2$.
We apply the part of Theorem \ref{a1} proved in steps (a), (b), (c) 
and (d) to the pair $(Z',S)$. We get that $S\setminus S\cap \ell$ is the 
union of the connected components
of $Z'$ not contained in $\ell$. Applying the same part of Theorem 
\ref{a1} to the pair $(Z',S')$ we get $S'\setminus S'\cap \ell = 
S\setminus S\cap \ell$, concluding the proof of the uniqueness
of $\mathcal {S}_2$. \qed
\\
\\
The following example shows that the assumption 
``~$\sbr(P)+\sr(P)\leq 2d+1$~'' in Theorem \ref{a1} is sharp.

\begin{example}\label{00}
Fix integers $m \ge 2$ and $d \ge 4$. Let $C\subset \mathbb {P}^m$ be 
a smooth conic. Let $Z\subset C$ be any unreduced degree $3$
subscheme. Set $\mathcal {Z}:= \nu _d(Z)$. Since $d\ge 2$, then $\mathcal 
{Z}$ is linearly independent. Since $\mathcal {Z}$ is curvilinear, it 
has
only finitely many degree $2$ subschemes. Thus
the plane $\langle \mathcal {Z}\rangle$ contains only finitely many 
lines spanned by a degree $2$ subscheme of $\mathcal {Z}$. Fix any
$P\in \langle \mathcal {Z}\rangle$ not contained in one of these 
lines. Remark \ref{Z} gives $\sbr (P) =3$. The proof of \cite{bgi}, 
Theorem 4, gives
$\sr (P) = 2d-1$ and the existence of a set $S\subset C$ such that 
$\sharp (S)=2d-1$ , $S\cap Z = \emptyset$ and $\nu _d(S)$ computes 
$\sr (P)$.
We have $\sbr (P) +\sr (P) =2d+2$.
\end{example}

\begin{lemma}\label{dd}
Fix $P\in \PP N$ such that $\rho := \sbr (P) =\sr (P) \le d$. Let 
$\Psi$ be the set of all 0-dimensional
schemes
$A\subset \PP m$  such that $\deg (A) = \rho$ and $P\in \langle \nu 
_d(A)\rangle$. Assume $\sharp (\Psi )\ge 2$. Fix any
$A\in \Psi$. Then there is a line $\ell\subset \PP m$ such that $\deg 
(\ell\cap A)\ge (d+2)/2$.
\end{lemma}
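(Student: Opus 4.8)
The plan is to manufacture a second scheme out of $\Psi$, extract a line from the pair via the two auxiliary lemmas, and then show that this line already meets the \emph{prescribed} $A$ in the required degree. First I would fix $B\in\Psi$ with $B\neq A$, which exists since $\sharp(\Psi)\ge 2$. Because $\deg(A)=\deg(B)=\rho\le d$, no line can meet $A$ or $B$ in degree $\ge d+2$, so by the contrapositive of Lemma \ref{p1} both $\nu_d(A)$ and $\nu_d(B)$ are linearly independent; as border-rank schemes of degree $\rho=\sbr(P)$ they are moreover minimal for $P$ (a proper subscheme whose span contained $P$ would have degree $<\rho$, contradicting $\sbr(P)=\rho$). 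Lemma \ref{c0} applied to $\nu_d(A)$ and $\nu_d(B)$ then gives $h^1(\PP N,\mathcal{I}_{\nu_d(A\cup B)}(1))>0$, that is $h^1(\PP m,\mathcal{I}_{A\cup B}(d))>0$. Since $\deg(A\cup B)\le 2\rho\le 2d\le 2d+1$, Lemma \ref{p1} with $x=d$ produces a unique line $\ell\subset\PP m$ with $\deg(\ell\cap(A\cup B))\ge d+2$; set $C_d:=\nu_d(\ell)$.

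Next I would split $P$ along $C_d$. For the off-line residual $A'=\mathrm{Res}_\ell(A)$ one has $\deg(A')\le\rho\le d$, so $A\cup\ell$ imposes independent conditions in degree $d$ (i.e. $h^1(\mathcal{I}_{A\cup\ell}(d))=0$), whence $\langle\nu_d(A)\rangle\cap\langle C_d\rangle=\langle\nu_d(A\cap\ell)\rangle$, and likewise for $B$. If $P\in\langle C_d\rangle$, then $P\in\langle\nu_d(A)\rangle\cap\langle C_d\rangle=\langle\nu_d(A\cap\ell)\rangle$, so minimality of $A$ forces $A\cap\ell=A$; hence $\deg(\ell\cap A)=\rho$, and from $2\rho\ge\deg(\ell\cap(A\cup B))\ge d+2$ we conclude $\rho\ge(d+2)/2$, as desired.

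The remaining and principal case is $P\notin\langle C_d\rangle$. Here the goal is to prove $\deg(A\cap\ell)=\deg(B\cap\ell)$, which is precisely the step that upgrades the conclusion from ``one of $A,B$'' to the prescribed $A$. I would obtain it by the residual-and-Grassmann bookkeeping used in the proof of Theorem \ref{a1} (steps (b)--(d)): writing $R:=\mathrm{Res}_\ell(A\cup B)$, which has $\deg(R)\le d-1$ and imposes independent conditions, one splits $P=P_0+v$ with $P_0\in\langle C_d\rangle$ and $v\in\langle\nu_d(R)\rangle$ uniquely, and checks that $v$ lies in the spans of the off-line parts of \emph{both} $A$ and $B$. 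Taking minimal subschemes $A''\subseteq\mathrm{Res}_\ell(A)$ and $B''\subseteq\mathrm{Res}_\ell(B)$ computing $v$, Lemma \ref{c0} forces $A''=B''=:D$, for otherwise $h^1(\mathcal{I}_{A''\cup B''}(d))>0$ with $\deg(A''\cup B'')\le\deg(R)\le d-1<d+2$, impossible by Lemma \ref{p1}. Feeding $D$ back through minimality of $A$ and of $B$ gives $\mathrm{Res}_\ell(A)=D=\mathrm{Res}_\ell(B)$, so $A=(A\cap\ell)\sqcup D$ and $B=(B\cap\ell)\sqcup D$; since $\deg(A)=\deg(B)=\rho$ this yields $\deg(A\cap\ell)=\deg(B\cap\ell)=:a$, while $A\neq B$ forces $A\cap\ell\neq B\cap\ell$. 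I expect this to be the main obstacle, because making the splitting $P=P_0+v$ genuinely canonical and tracking the residuals requires the general-hyperplane-through-$\ell$ computations of Lemma \ref{p1} to be carried out with care.

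Finally I would close the argument with a second application of Lemma \ref{c0}. Both $A\cap\ell$ and $B\cap\ell$ are distinct schemes on $\ell$, minimal for $P_0\in\langle\nu_d(A\cap\ell)\rangle\cap\langle\nu_d(B\cap\ell)\rangle$, so Lemma \ref{c0} gives $h^1(\PP m,\mathcal{I}_{(A\cap\ell)\cup(B\cap\ell)}(d))>0$; as this scheme lies on the rational normal curve $C_d$, that is equivalent to $\deg((A\cap\ell)\cup(B\cap\ell))\ge d+2$. Because each piece has degree $a$, we get $2a\ge\deg((A\cap\ell)\cup(B\cap\ell))\ge d+2$, i.e. $\deg(\ell\cap A)=a\ge(d+2)/2$, completing the proof.
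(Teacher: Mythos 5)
Your skeleton is the same as the paper's: pick $B\in\Psi\setminus\{A\}$, get $h^1(\mathcal{I}_{A\cup B}(d))>0$ via Lemma \ref{c0}, extract the line $\ell$ via Lemma \ref{p1}, show $A$ and $B$ share a common off-line part $D$ so that $\deg(A\cap\ell)=\deg(B\cap\ell)$, and conclude $2\deg(A\cap\ell)\ge d+2$. The paper fills the middle step by re-running steps (a)--(d) of the proof of Theorem \ref{a1} on the pair $(A,B)$ (choosing $B$ so that at least one of $A,B$ is reduced, which those steps need). The genuine gap in your proposal is the substitute you offer for that machinery: the ``canonical splitting'' $P=P_0+v$ with $P_0\in\langle C_d\rangle$, $v\in\langle\nu_d(R)\rangle$, $R=\mathrm{Res}_\ell(A\cup B)$. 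This splitting need not exist, let alone be unique. Existence fails whenever $A$ (or $B$) has a connected component supported at a point of $\ell$ but not contained in $\ell$: take $p=[1:0:0]\in\ell=\{x_2=0\}$ and the degree-$2$ component with ideal $(x_1,x_2^2)$; then $A\cap\ell$ and $\mathrm{Res}_\ell(A)$ are both the reduced point $p$, while $\langle\nu_d(A)\rangle$ contains the direction $x_0^{d-1}x_2\notin\langle C_d\rangle+\langle\nu_d(\mathrm{Res}_\ell(A))\rangle$. For the same reason the identity $A=(A\cap\ell)\sqcup\mathrm{Res}_\ell(A)$, which you use implicitly, is false for such $A$. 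Uniqueness fails too, because $R$ can meet $\ell$ (the residual of such a component is a point of $\ell$), so $\langle C_d\rangle\cap\langle\nu_d(R)\rangle\neq\emptyset$ in general; and uniqueness is exactly what you need to know that the $v$ produced from $A$ equals the $v$ produced from $B$.

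This is not a removable technicality: components supported on $\ell$ but not contained in $\ell$ are precisely what steps (c) and (d) of the paper's proof of Theorem \ref{a1} are designed to rule out, using minimality of the schemes together with the Grassmann/Lemma \ref{p1} dimension counts on the auxiliary schemes $W_Q$ and $W_1$. You cannot assume such components are absent when setting up the splitting, since that assumption is (essentially) the conclusion you are after --- the argument becomes circular. You flag this step yourself as ``the main obstacle,'' but deferring it to ``general-hyperplane computations carried out with care'' is not a proof; once one does it correctly one reproduces steps (b)--(d) of Theorem \ref{a1}, which is exactly what the paper means by ``repeat verbatim.'' Two smaller points: you should choose $B$ reduced whenever possible (the paper does, using $\sr(P)=\rho$), both because those steps treat one of the two schemes as a reduced set and because minimality of a reduced $B$ is free from the definition of rank; and your one-line minimality claim for the arbitrary $A$ (``a proper subscheme spanning $P$ would contradict $\sbr(P)=\rho$'') silently identifies border rank with the minimal degree of an arbitrary, possibly non-smoothable, scheme whose span contains $P$, whereas Remark \ref{Z} only gives this for smoothable schemes. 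Your first case ($P\in\langle C_d\rangle$) and your closing count are fine, but the core of the lemma remains unproved as written.
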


\begin{proof}
Since $\sr (P) =\rho$ and $\sharp (\Psi )\ge 2$, there is $B\in \Psi$ 
such that $B \ne A$ and at least
one among the schemes $A$ and $B$ is reduced.
Since $\deg (A\cup B)\le 2d+1$ and $h^1(\PP m,\mathcal {I}_{A\cup 
B}(d)) >0$, there is a line $\ell\subset \PP m$ such
that $\deg ((A\cup B)\cap \ell) \ge d+2$. We may
repeat verbatim the proof of Theorem \ref{a1}, because it does not 
use the inequality $\deg (A) < \deg (B)$, but only that $\deg (\mathcal {Z}) \le \deg (\mathcal {S})$
and $\mathcal {Z} \ne \mathcal {S}$ (if $\mathcal {T} \ne \mathcal {Z}$, then $\deg (\mathcal {T})<
\deg (\mathcal {Z})\le \deg (\mathcal {S})$ and hence $\mathcal {T} \ne \mathcal {S}$).
We get $A = A_1\sqcup A_2$ and $B=B_1\sqcup A_2$ with $A_2$ reduced, 
$A_2\cap \ell=\emptyset$ and
$A_1\cup B_1\subset \ell$. Since $\deg (A)=\deg (B)$, we have $\deg 
(A_1)=\deg (B_1)$. Thus $\deg (A_1)\ge (d+2)/2$.
\end{proof}

\vspace{0.3cm}

\qquad {\emph {Proof of Theorem \ref{oo}.}} Since $\sbr (P) \le \rho 
\le d$, the border rank is the minimal degree of a smoothable 0-dimensional
scheme $\mathcal {A}\subset X_{m,d}$ such that $P\in \langle \mathcal {A}\rangle$ (Remark 
\ref{Z}). Thus it is sufficient to prove
the last assertion. Assume the existence of a 0-dimensional scheme 
$\mathcal {Z} \subset X_{m,d}$ such that
$z:= \deg (\mathcal {Z})\le \rho$ and $P\in \langle \mathcal 
{Z}\rangle$. If $z=\rho$ we also assume $\mathcal {Z}\ne \nu _d(B)$. 
Taking $z$
minimal, we may also assume $z\le \sbr (P)$. Let $Z\subset \PP m$ be the 
only scheme such that $\nu _d(Z) =\mathcal {Z}$. If $z < \rho$ we 
apply a small part of the proof of Theorem \ref{a1} to the pair 
$(\mathcal {Z},\nu _d(B))$
(we just use or reprove that $\deg ((Z\cup B)\cap \ell) \ge d+2$ and 
that $\deg (B\cap \ell) =\deg (Z\cap \ell) +\rho -z \ge \deg (Z\cap \ell)$). 
We get a contradiction: indeed
$B\cap \ell$ must have degree $\ge (d+1)/2$, contradiction. If $z= 
\rho$, then we use Lemma \ref{dd}.\qed

\begin{example}\label{o+}
Assume $m=2$ and $d \ge 4$. Let $C\subset \PP 2$ be a smooth conic. 
Fix sets $S, S'\subset C$ such that $\sharp (S)=\sharp
(S')=d+1$ and $S\cap S'=\emptyset$. Since no $3$ points of $C$ are 
collinear, the sets $S$, $S'$ and $S\cup S'$ are in linearly
general position. Since
$h^0(C,\mathcal {O}_C(d))=2d+1$ and
$C$ is projectively normal, we have
$h^1(\PP 2,\mathcal {I}_S(d))=h^1(\PP 2,\mathcal {I}_{S'}(d))=0$ and 
$h^1(\PP 2,\mathcal {I}_{S\cup S'}(d))=1$. Thus $\nu
_d(S)$ and $\nu _d(S')$ are linearly independent and $\langle \nu 
_d(S)\rangle \cap \langle \nu _d(S')\rangle$ is a unique
point. Call $P$ this point. Obviously $\sr (P)\le d+1$. In order to get the 
example claimed in the Introduction after the statement of Theorem 
\ref{oo}, it is sufficient to
prove that $\sbr (P) \ge d+1$. Assume $\sbr (P)\le d$ and take $Z$ 
computing $\sbr (P)$. We may apply a small part of the proof of 
Theorem
\ref{a1} to
$P, S, Z$ (even if a priori $S$ may not compute $\sr (P)$). We get 
the existence of a line $\ell$ such
that $\deg (Z\cap \ell) < \sharp (S\cap \ell)$ and $\deg (Z\cap \ell)+\sharp 
(S\cap \ell) \ge d+2$. Since $d \ge 4$, we get
$\sharp (S\cap \ell) \ge 3$, that is a contradiction.
\end{example}

We do not have experimental evidence to raise the
following question (see \cite{bgi} for the cases with $\sbr(P) \le 3$).

\begin{question}\label{q1}
Is it true that $\sr(P) \le d(\sbr(P)-1)$ for all
$P\in \PP N$ and that equality holds
if and only if $P\in TX_{m,d}\setminus X_{m,d}$ where 
$TX_{m,d}\subset \PP N$ is the tangential variety of the Veronese 
variety $X_{m,d}$?
\end{question}

\providecommand{\bysame}{\leavevmode\hbox to3em{\hrulefill}\thinspace}

\end{document}